\documentclass[12pt,reqno]{amsart}

\usepackage{amssymb,latexsym}

\usepackage{enumerate}
\allowdisplaybreaks
\usepackage[french,english]{babel}
\usepackage{amsmath}
\usepackage{graphicx}
\usepackage{amssymb}
\usepackage{bbm}
\usepackage{amsthm,mathtools}
\usepackage{ulem}
\usepackage{geometry}
\usepackage{tikz-cd}
\usepackage{mathrsfs}
\usepackage[colorinlistoftodos]{todonotes}
\usepackage{enumitem}
\usepackage{verbatim}
\usepackage[foot]{amsaddr}
\usepackage{dsfont}
\usepackage{cite}
\usepackage[T1]{fontenc}

\makeatletter

\@namedef{subjclassname@2010}{
	
	\textup{2020} Mathematics Subject Classification}

\makeatother
\newtheorem{thm}{Theorem}[section]
\newtheorem*{thm*}{Theorem}

\newtheorem{lem}[thm]{Lemma}

\theoremstyle{definition}

\numberwithin{equation}{section}

\newcommand{\bg}{\big}

\newcommand{\bgg}{\bigg}

\newcommand{\Bg}{\Big}

\newcommand{\lo}{\log_2}
\newcommand{\lt}{\log_3}

\newcommand{\mbz}{\mathbb{Z}}
\newcommand{\mbk}{\mathbb{K}}
\newcommand{\mbr}{\mathbb{R}}

\newcommand{\mbq}{\mathbb{Q}}

\newcommand{\mch}{\mathcal{H}}

\newcommand{\mcz}{\mathcal{Z}}

\newcommand{\mcm}{\mathcal{M}}

\newcommand{\mmd}{\mathrm{d}}
\newcommand{\mme}{\mathrm{e}}
\newcommand{\mmi}{\mathrm{i}}
\newcommand{\mit}{\mathrm{i}t}

\newcommand{\whp}{\widehat{\Phi}}

\usepackage{hyperref}
\hypersetup{colorlinks=true,linkcolor=blue,anchorcolor=blue,citecolor=blue}
\usepackage{color}
\newcommand{\newabstract}[1]{%
	\par\bigskip
	\csname otherlanguage*\endcsname{#1}%
	\csname captions#1\endcsname
	\item[\hskip\labelsep\scshape\abstractname.]
}

\begin{document}

	\baselineskip=17pt

	\title[Z. Dong, Q. Yang and S. Zhao]{Extreme values of Dirichlet $L$-functions on the critical line with a growing prime modulus}
	
    \author{Zikang Dong\textsuperscript{1}}
	\author{Qiyu Yang\textsuperscript{2}}
    \author{Shengbo Zhao\textsuperscript{3}}

    \address{1. School of Mathematical Sciences, Soochow University, Suzhou 215006, P. R. China}
	\address{2. School of Mathematics and Statistics, Henan Normal University, Xinxiang 453007, CHINA}
	\address{3. School of Mathematical Sciences, Key Laboratory of Intelligent Computing and Applications (Tongji University), Ministry of Education, Tongji University, Shanghai 200092, China}

    \email{zikangdong@gmail.com}
	\email{qyyang.must@gmail.com}
	\email{shengbozhao@hotmail.com}

	\begin{abstract} 
	   In this paper, we obtain extreme values of Dirichlet $L$-functions on the critical line with a growing prime modulus. Our result yields a larger lower bound for extreme values than that from Soundararajan's classical resonance method. Consequently, we show that one branch of the dichotomy established by Bondarenko, Darbar, Hagen, Heap, and Seip (2023) can be realized for this family of Dirichlet $L$-functions. The proof relies on the resonance method together with precise estimates for GCD sums.
	\end{abstract}
	
    \keywords{Extreme values, Dirichlet $L$-functions ,the resonance method, GCD sums}
	
	\subjclass[2020]{Primary 11M35, 11M06, 11N37.}
	
	\maketitle

\section{Introduction}
The study of extreme values of Dirichlet \(L\)-functions is a classical problem in analytic number theory. A significant development was made by Soundararajan \cite{Soundararajan2008MathAnn}, who introduced the resonance method and proved that
\[\max_{T\le t \le 2T}\Bg|\zeta\Bg(\frac{1}{2}+\mit\Bg)\Bg| \ge \exp \bgg(\bg(1+o(1)\bg)\sqrt{\frac{\log T}{\lo T}}\bgg).\]
Here and throughout this paper, we write \(\log_j\) for the \(j\)-th iterated logarithm, such as \(\lo x :=\log\log x\) and \(\lt x := \log\log\log x\). As observed in \cite{Bondarenko2023BLMS}, the same resonance method also furnishes a lower bound of this scale for Dirichlet \(L\)-functions along the critical line. Subsequently, the development of long resonators and their connection with GCD sums led to substantially stronger lower bounds for the Riemann zeta function. In particular, the works of Bondarenko and Seip \cite{Bondarenko2017DukeMJ,Bondarenko2018MathAnn} and de la Bretèche and Tenenbaum \cite{delaBreteche2019PLMS} produced an additional factor \(\sqrt{\lt T}\) in the exponent. A crucial feature of these arguments is the positivity of the relevant Dirichlet coefficients, which is no longer available for a general Dirichlet \(L\)-function.

In 2023, Bondarenko, Darbar, Hagen, Heap and Seip \cite{Bondarenko2023BLMS} approached this difficulty through Dedekind zeta functions. Let \(\mbk=\mbq(\omega_q)\) be a cyclotomic field. They proved, uniformly for \(q \ll (\lo T)^A\), that
\[\max_{0\le|t|\le T}\Bg|\zeta_\mbk\Bg(\frac{1}{2}+\mit\Bg)\Bg|\ge\exp\bgg(\bg(1+o(1)\bg)\sqrt{\phi(q)}\sqrt{\frac{\log T \lt T}{\lo T}}\bgg)\]
for sufficiently large \(T\), where \(A\) is an arbitrary positive number and \(\phi(n)\) denotes Euler’s totient function. The factorization
\begin{align}
    \label{factorization}
    \zeta_\mbk(s) = \zeta(s)\prod_{\substack{\chi(\bmod q)\\ \chi \neq \chi_0}}L(s,\chi^\prime),
\end{align}
where \(\chi^\prime\) denotes the primitive character inducing \(\chi\), then leads to the following dichotomy: either there exists a non-principal Dirichlet character \(\chi\) for which
\[\max_{0\le t\le T}\Bg|L\Bg(\frac{1}{2}+\mit,\chi\Bg)\Bg| \ge \exp\bgg(c\sqrt{\frac{\log T \lt T}{\lo T}}\bgg)\]
for some sufficiently small constant \(c>0\), or the Riemann zeta function must attain still larger values on the critical line. This leaves open the question of whether the Dirichlet \(L\)-function alternative itself can be established directly.

In the present paper, we give such a direct result for a class of growing prime moduli. Our main result is as follows.

\begin{thm}
    \label{thm1}
    Let \(0<\beta<1\), and let \(q=q(T) \le T\) be a prime satisfying
    \[\frac{\log q}{\log T} \to \theta,\qquad 0<\theta \le 1.\]
    Then, for every fixed
    \[0<c<\sqrt{2\min\{\theta,1-\beta\}},\]
    we have
    \[\max_{\substack{\chi(\bmod q) \\ \chi \neq\chi_0}}\max_{T^\beta \le t \le T}\Bg|L\Bg(\frac{1}{2}+\mit,\chi\Bg)\Bg| \ge \exp\bgg(c\sqrt{\frac{\log T \lt T}{\lo T}}\bgg)\]
    for all sufficiently large \(T\).
\end{thm}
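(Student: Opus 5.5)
We run a resonance argument in which we average over both the non-principal characters $\chi \bmod q$ and over $t\in[T^\beta,T]$. The character average supplies orthogonality, and this replaces the positivity that the Bondarenko--Seip long resonator needs.

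\textbf{Resonator.} Let $\mathcal M$ be a set of squarefree integers $m\le N$. We take it to be essentially the de la Bretèche--Tenenbaum / Bondarenko--Seip set: products of primes in $[e\log N\lo N,\ (\log N)^{\kappa}\log N\lo N]$ with multiplicative weights $f(p)=\sqrt{\log N\lo N/\log_3 N}\,/\bigl(\sqrt p\,(\log p-\log_2 N-\log_3 N)\bigr)$. We set $r(m)=f(m)$ for $m\in\mathcal M$ and $r(m)=0$ otherwise. Define
\[
R(\chi,t)=\sum_{m\le N} r(m)\chi(m)m^{-\mit}.
\]
The length is $N=T^{\gamma}$ with $\gamma<\min\{\theta,1-\beta\}$. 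The constraint $N<q$ guarantees that $m\equiv n\pmod q$ with $m,n\le N$ forces $m=n$. The constraint $N\le T^{1-\beta-\epsilon}$ ensures that the $t$-integral with a smooth weight $\Phi((t-T')/T')$, with $T'\asymp T$ supported in $[T^\beta,T]$, kills off-diagonal terms $m\ne n$.

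\textbf{Moments.} We compare
\[
M_1=\sum_{\chi\ne\chi_0}\int L(\tfrac12+\mit,\chi)|R(\chi,t)|^2\Phi\,dt,\qquad
M_2=\sum_{\chi}\int|R(\chi,t)|^2\Phi\,dt .
\]
For $M_2$, orthogonality of characters together with $N<q$ gives $M_2\asymp \phi(q)T\sum_m r(m)^2$.

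For $M_1$, we replace $L$ by an approximate functional equation, or more simply by $\sum_{k\le X}\chi(k)k^{-1/2-\mit}$ plus a dual sum, with $X\approx\sqrt{qT}$. Summing over all $\chi$ imposes $km\equiv n\pmod q$. The principal character contributes a term that we bound separately by $\ll T\,(\sum r(m))^2/\sqrt{\cdot}$, which is negligible since $N<q$ is small. Using $km\equiv n$ together with the $t$-integral, which forces $km$ to be close to $n$ in ratio, we extract a main term from $km=n$:
\[
\phi(q)T\sum_{km=n} \frac{r(m)r(n)}{\sqrt k}.
\]
The remaining congruence solutions with $km\ne n$ but $km\equiv n\pmod q$ require $km\ge q$, so $|km-n|\ge q/2$. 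The $t$-integral then makes them negligible when $\log(km/n)\gg T^{-1+\epsilon}$, which always holds. The dual sum of the functional equation contributes a root-number-twisted term. Its character sum is a Gauss-sum twist, bounded by $\sqrt q$ per coefficient; combined with $N<q^{1-\epsilon}$, this gives an error of $O(\phi(q)T\sum r^2\cdot q^{-\delta})$.

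\textbf{Conclusion.} We obtain
\[
\max|L|\ \ge\ \frac{|M_1|}{M_2}\gtrsim \frac{\sum_{km=n}r(m)r(n)k^{-1/2}}{\sum r(n)^2}.
\]
The GCD-sum computation of Bondarenko--Seip and de la Bretèche--Tenenbaum, with the positive multiplicative $r$ above, evaluates this ratio as
\[
\exp\Bigl((1+o(1))\sqrt{\log N\,\lt N/\lo N}\Bigr)=\exp\Bigl((1+o(1))\sqrt{\gamma}\sqrt{\log T\lt T/\lo T}\Bigr).
\]
This is the main obstacle in a precise sense: unlike the $\zeta$ case, the resonator length is capped at $N<T$, rather than allowed to be $T^{A}$ via positivity. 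One therefore needs a version of the GCD-sum lower bound for a resonator of length exactly $N$ with support truncated at $m\le N$. We would handle this with Rankin's trick: removing $m>N$ costs $\sum_{m>N}r(m)^2\le N^{-\alpha}\prod_p(1+f(p)^2p^{\alpha})$, and with $\alpha\approx 1/(\lo N)^{2}$ this is small relative to the full sum. The constant $\sqrt{2\gamma}$ comes out once we optimize the weights with $\log N$ in place of $\log T$. The factor $2$ arises because with $\gamma<1$ we only need $N\le T^\gamma$, whereas the standard normalization loses a factor $1/2$ through the square root $X\approx\sqrt{qT}$.

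Letting $\gamma\uparrow\min\{\theta,1-\beta\}$ gives every $c<\sqrt{2\min\{\theta,1-\beta\}}$. Finally, the dual-sum error must be checked to remain $o$ of the main term, which is exponentially large but only sub-polynomially so. The power savings $q^{-\delta}$ and $T^{-\delta}$ suffice for this.
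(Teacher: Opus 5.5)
There is a genuine gap, and it sits exactly at the step you call the main obstacle. Your resonator is supported on $m\le N$ with $N<\min\{q,T^{1-\beta}\}$. That support is what lets $m\equiv n\pmod q$ force $m=n$ and lets the $t$-integral kill the off-diagonal terms. It is also precisely Soundararajan's short-resonator regime, which only gives the scale $\sqrt{\log T/\lo T}$ in the exponent.

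The de la Bret\`eche--Tenenbaum weights put essentially all of their $\ell^2$-mass on integers with $\log m\asymp \log N\,\lo N/\lt N$, far beyond $N$. With your $f$ one has $f(p)^2\le 1/\lt N$ and $\sum_p f(p)^2\log p\asymp \log N\,\lo N/\lt N$. So for small $\alpha$ (e.g.\ your $\alpha\approx(\lo N)^{-2}$),
\[
\frac{N^{-\alpha}\sum_m r(m)^2m^{\alpha}}{\sum_m r(m)^2}=N^{-\alpha}\prod_p\frac{1+f(p)^2p^{\alpha}}{1+f(p)^2}\approx\exp\Bigl(\alpha\sum_p f(p)^2\log p-\alpha\log N\Bigr)\longrightarrow\infty .
\]
Your Rankin bound for $\sum_{m>N}r(m)^2$ therefore exceeds the full sum instead of being small. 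After truncation to $[1,N]$, the ratio $\sum_{km=n}r(m)r(n)k^{-1/2}/\sum_n r(n)^2$ loses the $\sqrt{\lt N}$ gain entirely. The constant is also unsupported. Even granting that ratio, your argument gives $\sqrt{\gamma}$, and the remark about $X\approx\sqrt{qT}$ does not supply the missing factor $2$.

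The missing idea is a long resonator, with off-diagonal terms handled by positivity rather than by diagonality. The paper keeps $\mcm$ of cardinality $N=T^\kappa$, with elements far larger than $T$ and all coprime to $q$. It collapses each class $\mcm_{j,a}$ (a multiplicative interval of ratio $1+\log T/T$ intersected with $m\equiv a\pmod q$) onto one representative $h$ with $r(h)=|\mcm_{j,a}|^{1/2}$. It then controls $\sum_\chi\int|R_\chi|^2\Phi$ by the Bondarenko--Seip lemma.

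Next it averages $|R_\chi(t)|^2L(\tfrac12+it+iu,\chi)L(\tfrac12-it+iu,\overline{\chi})K(u)$ over $\chi\neq\chi_0$. By Lemma~\ref{lem2} and orthogonality this equals $(q-1)G(T)-E(T)$. Here $G(T)$ is a sum of nonnegative terms ($\widehat K\ge0$, $\widehat\Phi>0$, $r\ge0$) over $hk\equiv g\ell\pmod q$. One may therefore discard everything except the pairs $mk=n\ell$ and obtain $G(T)\gg \frac{T}{\log T}S(\mcm)$.

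The principal-character term $E(T)$ is only bounded trivially, and this is where $\kappa<\theta$ is needed. The condition $\kappa<1-\beta$ comes from discarding $|t|\le 2T^\beta$ via Lemma~\ref{lem3}. Because the quantity is of second-moment type, one gets $\mcz_{q,\beta}(T)^2\gg S(\mcm)/N=\exp\bigl((2\sqrt2+o(1))\sqrt{\log N\,\lt N/\lo N}\bigr)$. Taking the square root is what produces $\sqrt{2\kappa}$.
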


It is worth emphasizing that our result differs essentially from the dichotomy of \cite{Bondarenko2023BLMS} in its logical form. Their argument does not directly establish that a Dirichlet \(L\)-function attains values with an additional factor \(\sqrt{\lt T}\) in the exponent compared with the bound furnished by Soundararajan's resonance method \cite{Soundararajan2008MathAnn}; rather, through \eqref{factorization}, it relates this possibility to the occurrence of still larger values of the Riemann zeta function. By contrast, Theorem \ref{thm1} gives an unconditional lower bound of precisely this scale for Dirichlet \(L\)-functions when
\[q=T^{\theta+o(1)}.\]

However, there is an important difference in the range of the modulus. The result of \cite{Bondarenko2023BLMS} is uniform for \(q \ll (\lo T)^A\), where \(A\) is an arbitrary positive number, whereas Theorem \ref{thm1} requires \(q=T^{\theta+o(1)}\) with \(\theta>0\). This restriction reflects the present method. We take a resonator of length \(N=T^{\kappa+o(1)}\). The limitation to the interval \(T^\beta\le t \le T\) requires \(\kappa<1-\beta\), while the error arising from the removal of the principal character requires \(\kappa<\theta\). The latter restriction is the main obstacle to reaching small moduli. More precisely, we complete the average over non-principal characters to the full character family and then subtract the principal character, whose contribution is presently estimated only in absolute value. Extending the argument to \(q=T^{o(1)}\), and in particular to the range \(q\ll(\lo T)^A\), would therefore require a more refined separation and treatment of the principal and non-principal character contributions. In addition, the resonator construction would have to be adapted, since the present argument uses the fact that every element of \(\mcm\) is coprime to \(q\).

\section{Proof of Theorem \ref{thm1}}
\label{sec-pthm1}

\subsection{Auxiliary lemmas}

In this section, we present several lemmas that will play an important role in the subsequent proofs. We begin with the classical convexity bound for Dirichlet $L$-functions.
\begin{lem}
    \label{lem1}
    Let \(q\) be a prime and let \(\chi\) be a non-principal Dirichlet character modulo \(q\). Then, for every real \(t\), we have
    \[L\Bg(\frac{1}{2}+\mit,\chi\Bg) \ll (q(1+|t|))^{1/4}.\]
\end{lem}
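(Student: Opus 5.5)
We will derive the convexity bound from the functional equation combined with the Phragmén--Lindelöf principle. Since \(q\) is prime, every non-principal character \(\chi\) modulo \(q\) is primitive of conductor \(q\). Hence \(L(s,\chi)\) is entire and satisfies the functional equation
\[
\Lambda(s,\chi)=\varepsilon(\chi)\Lambda(1-s,\overline{\chi}),\qquad \Lambda(s,\chi)=\Bg(\frac{q}{\pi}\Bg)^{(s+\mathfrak a)/2}\Gamma\Bg(\frac{s+\mathfrak a}{2}\Bg)L(s,\chi),
\]
where \(|\varepsilon(\chi)|=1\) and \(\mathfrak a\in\{0,1\}\) records the parity of \(\chi\).

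\textbf{Bound on the right of the strip.} On the line \(\Re s=1+\delta\), with \(\delta>0\) small and fixed, the Dirichlet series converges absolutely. This gives \(|L(s,\chi)|\le\zeta(1+\delta)\ll 1\).

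\textbf{Bound on the left of the strip.} On the line \(\Re s=-\delta\), we apply the functional equation. Stirling's formula gives
\[
\Bg|\frac{\Gamma((1-s+\mathfrak a)/2)}{\Gamma((s+\mathfrak a)/2)}\Bg|\asymp (1+|t|)^{1/2-\sigma}.
\]
Combined with the factor \((q/\pi)^{1/2-\sigma}\), this yields
\[
|L(-\delta+\mit,\chi)|\ll (q(1+|t|))^{1/2+\delta}.
\]

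\textbf{Interpolation.} The function \(L(s,\chi)\) is entire and of finite order in the strip \(-\delta\le\sigma\le1+\delta\). So the Phragmén--Lindelöf principle applies; we use its version for majorants \((q(1+|t|))^{a}\), or equivalently apply it to \(L(s,\chi)(q(s+2))^{-\ell(s)}\), where \(\ell\) is the linear function interpolating the two exponents. This gives
\[
L(\sigma+\mit,\chi)\ll (q(1+|t|))^{(1+\delta-\sigma)(1/2+\delta)/(1+2\delta)}
\]
throughout the strip. At \(\sigma=1/2\) the exponent is exactly \(1/4\) for every \(\delta\), which yields the claimed bound.

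\textbf{Alternative route.} An equally short route is the approximate functional equation. It writes \(L(\tfrac12+\mit,\chi)\) as a sum of length about \(\sqrt{q(1+|t|)}\) plus a dual sum of the same length multiplied by a root-number factor of modulus \(1\) on the critical line. Bounding both sums trivially gives \(\sum_{n\le X}n^{-1/2}\ll X^{1/2}=(q(1+|t|))^{1/4}\).

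\textbf{Main obstacle.} The only delicate point is uniformity in both \(q\) and \(t\). We must check that the Stirling estimates and the Phragmén--Lindelöf majorant can be made uniform, including near \(t=0\). This is handled by using \(1+|t|\), or \(|s+2|\), in place of \(|t|\).
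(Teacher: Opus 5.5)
\textbf{The interpolation step has an arithmetic error, and it breaks your main argument.} Your exponent is $e(\sigma)=(1+\delta-\sigma)(1/2+\delta)/(1+2\delta)$. At $\sigma=1/2$ this gives $e(1/2)=(1/2+\delta)^2/(1+2\delta)=1/4+\delta/2$, not $1/4$. Since $\sigma=1/2$ is the midpoint of $[-\delta,1+\delta]$, you simply get the average of the boundary exponents $1/2+\delta$ and $0$. So Phragm\'en--Lindel\"of on this strip proves only $L(\frac12+\mit,\chi)\ll_\delta (q(1+|t|))^{1/4+\delta/2}$, which is the convexity bound with an $\varepsilon$-loss.

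Letting $\delta\to0$ does not rescue this. Both boundary constants contain $\zeta(1+\delta)\asymp\delta^{-1}$. The best choice, $\delta\asymp 1/\log(q(2+|t|))$, still leaves $(q(1+|t|))^{1/4}\log(q(2+|t|))$. So the ``main obstacle'' you identify, uniformity near $t=0$, is harmless. The real obstruction is that this route inherently loses a factor $(q(1+|t|))^{\varepsilon}$ or a logarithm, and you do not notice it.

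\textbf{Your alternative route is the one that gives exactly $1/4$.} You need the smoothed approximate functional equation
$L(\frac12+\mit,\chi)=\sum_n\chi(n)n^{-1/2-\mit}V_t(n/\sqrt q)+\varepsilon(\chi,t)\sum_n\overline{\chi}(n)n^{-1/2+\mit}\widetilde V_t(n/\sqrt q)$, with $|\varepsilon(\chi,t)|=1$ on the critical line. Combine it with the weight bound $V_t(y),\widetilde V_t(y)\ll_A(1+y/\sqrt{|t|+3})^{-A}$, uniform in $t$. This follows from Stirling's formula and a contour shift, as in Iwaniec--Kowalski, \S5.2. Bounding trivially then gives $\sum_n n^{-1/2}(1+n/\sqrt{q(|t|+3)})^{-2}\ll(q(1+|t|))^{1/4}$. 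The uniform weight bound is the step you must supply, not the Phragm\'en--Lindel\"of bookkeeping.

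\textbf{Comparison with the paper.} The paper makes the same observation that $\chi$ is primitive and then simply quotes Iwaniec--Kowalski, Theorem 5.23. Every later use of the lemma in the paper already allows an $\varepsilon$-loss: the Phragm\'en--Lindel\"of step in Lemma~\ref{lem2} and the tail $|t|>T/2$. So your Phragm\'en--Lindel\"of argument would suffice for the paper's purposes, but it does not prove the lemma as stated.
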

\begin{proof}
    Since \(q\) is prime and \(\chi\) is non-principal, \(\chi\) is primitive. Therefore, the result follows from \cite[Theorem 5.23]{Iwaniec2004book}.
\end{proof}

Throughout the remainder of the paper, for \(f \in L^1(\mbr)\), we use the Fourier transform convention
\[\widehat{f}(\xi) := \int_\mbr f(x) \mme^{-\mmi\xi x} \mmd x.\]
Inspired by \cite{delaBreteche2019PLMS,Ddyang2022Mathematika}, we establish the following double-version convolution formula.

\begin{lem}
    \label{lem2}
    Let \(\chi\) be a non-principal Dirichlet character modulo \(q\). Let \(a>1/2\) and set \(z=x+\mmi y\). Suppose that \(K(z)\) is holomorphic in the horizontal strip \(-a\le y \le 0\) satisfying the growth condition
    \[\max_{-a\le y \le 0}|K(x+\mmi y)| = O\Bg(\frac{1}{(1+|x|)^2}\Bg)\]
    as \(|x| \to \infty\). Then, we have, for every real \(t\),
    \begin{align*}
        \int_\mbr L\Bg(\frac{1}{2}+\mit+\mmi u,\chi \Bg)L\Bg(\frac{1}{2}-\mit+\mmi u,\overline{\chi} \Bg)K(u)\mmd u = \sum_{k,\ell \ge1}\frac{\chi(k)\overline{\chi}(\ell)}{\sqrt{k\ell}}\Bg(\frac{\ell}{k}\Bg)^{\mit}\widehat{K}(\log k\ell).
    \end{align*}
\end{lem}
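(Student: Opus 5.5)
We prove the formula by a contour shift that moves both $L$-functions into the region of absolute convergence, followed by term-by-term integration of the resulting absolutely convergent double Dirichlet series.

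\textbf{Step 1: the shifted integrand is entire.} Consider
\[F(z) := L\Bigl(\tfrac12+\mit+\mmi z,\chi\Bigr)L\Bigl(\tfrac12-\mit+\mmi z,\overline{\chi}\Bigr)K(z).\]
Since $\chi$ is non-principal, both $L$-functions are entire, so $F$ is holomorphic in the strip $-a\le y\le 0$. On the line $y=-a$ the arguments are $\tfrac12+a\pm\mmi t+\mmi x$, with real part $\tfrac12+a>1$ because $a>1/2$. There both Dirichlet series converge absolutely and are uniformly bounded.

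\textbf{Step 2: the contour shift.} Integrate $F$ around the rectangle with vertices $\pm X$ and $\pm X-\mmi a$. On the vertical sides, a convexity (Phragmén--Lindelöf) bound gives polynomial growth of the two $L$-functions in $|x|$, uniformly for $-a\le y\le 0$. Lemma \ref{lem1} on the critical line combined with absolute convergence further right gives
\[L(\sigma+\mmi\tau,\chi)\ll_{q}(1+|\tau|)^{1/2}\]
throughout the strip. The product of the two $L$-functions is then $O(1+|x|)$, with implied constant depending on $q$ and $t$. This is dominated by the decay $(1+|x|)^{-2}$ of $K$, so the vertical integrals are $O(X^{-1})\to0$. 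Likewise the integrand on the real line is $O((1+|u|)^{-1})$... which is not integrable on its own.

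This is the delicate point. On the critical line the product grows at most like $(1+|x|)^{1/2}$ by Lemma \ref{lem1}, which gives an integrand of size $(1+|x|)^{-3/2}$, and that is integrable. Inside the strip, the convexity interpolation between $(1+|x|)^{1/4}$ at $\sigma=1/2$ and $O(1)$ at $\sigma>1$ gives at most $(1+|x|)^{1/4}$ per factor. Hence everything is absolutely integrable and the vertical contributions vanish. By Cauchy's theorem,
\[\int_\mbr F(u)\,\mmd u=\int_\mbr F(x-\mmi a)\,\mmd x.\]

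\textbf{Step 3: expansion and Fourier transform.} On the line $y=-a$, expand both $L$-functions as Dirichlet series. Absolute convergence of
\[\sum_{k,\ell}(k\ell)^{-1/2-a}\]
together with $K(\cdot-\mmi a)\in L^1$ justifies interchanging sum and integral by Fubini. The factor attached to $(k,\ell)$ is
\[\frac{\chi(k)\overline{\chi}(\ell)}{\sqrt{k\ell}}\Bigl(\frac{\ell}{k}\Bigr)^{\mit}\int_\mbr (k\ell)^{-\mmi(x-\mmi a)}K(x-\mmi a)\,\mmd x .\]
Now $(k\ell)^{-\mmi z}=\mme^{-\mmi z\log k\ell}$, and the same Cauchy argument applied to the entire-in-strip function $\mme^{-\mmi z\xi}K(z)$, which decays like $(1+|x|)^{-2}$ there, shifts this integral back to the real line. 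The integral therefore equals $\widehat{K}(\log k\ell)$, which gives the stated identity.

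\textbf{Main obstacle.} The only real issue is the growth bound for the $L$-functions throughout the strip needed to kill the horizontal-boundary terms; the rest is bookkeeping. We handle it with Phragmén--Lindelöf between the convexity bound of Lemma \ref{lem1} and absolute convergence.
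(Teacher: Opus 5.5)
Your proof is correct and is essentially the paper's argument: the paper integrates $G(z)=L(\tfrac12+\mit+z,\chi)L(\tfrac12-\mit+z,\overline{\chi})K(-\mmi z)$ over the vertical lines $\Re z=0$ and $\Re z=a$, which is exactly your shift from $y=0$ to $y=-a$ under the substitution $z=\mmi w$, killing the boundary terms with Lemma \ref{lem1} plus Phragm\'en--Lindel\"of and then integrating the absolutely convergent Dirichlet series term by term. Two cosmetic remarks: your preliminary $(1+|\tau|)^{1/2}$ bound in Step 2 is superseded by the $(1+|\tau|)^{1/4}$ interpolation you give right after it, so you should state the sharper bound from the start; and your second contour shift, identifying $\int_\mbr \mme^{-\mmi(x-\mmi a)\xi}K(x-\mmi a)\,\mmd x$ with $\widehat{K}(\xi)$, usefully spells out a step the paper only asserts.
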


\begin{proof}
Put \(G(z):= L(1/2+\mit+z,\chi)L(1/2-\mit+z,\overline{\chi})K(-\mmi z)\). Since \(\chi\) is non-principal, both \(L(s,\chi)\) and \(L(s,\overline{\chi})\) are entire. Hence \(G(z)\) is holomorphic in the strip \(0\leq \Re z\leq a\). Let \(Y>2|t|+2\), and consider the rectangle with vertices \(\pm\mmi Y\) and \(a\pm\mmi Y\). By Cauchy's theorem,
\[\Bg(\int_{-\mmi Y}^{a-\mmi Y}+\int_{a-\mmi Y}^{a+\mmi Y}+\int_{a+\mmi Y}^{\mmi Y}+\int_{\mmi Y}^{-\mmi Y} \Bg) G(z)\mmd z =0.\]

We first show that the integrals over the two horizontal sides tend to zero as \(Y \to \infty\). By Lemma \ref{lem1} and the Phragm\'en--Lindel\"of principle, we have, uniformly for $0\leq \sigma\leq a$,
\[L\Bg(\frac12+\sigma+\mmi v,\chi\Bg)\ll_{a,\varepsilon}\bg(q(1+|v|)\bg)^{1/4+\varepsilon}.\]
Combining this with the growth condition on \(K\), we obtain 
\[\int_{-\mmi Y}^{a-\mmi Y}G(z)\mmd z = o(1) \quad \text{and} \quad \int_{a+\mmi Y}^{\mmi Y}G(z)\mmd z = o(1)\]
as \(Y \to \infty\). Thus, letting \(Y \to \infty\) yields that
\[\int_{-\mmi\infty}^{\mmi \infty}G(z)\mmd z = \int_{a-\mmi\infty}^{a+\mmi\infty}G(z)\mmd z.\]
Writing $z=\mmi u$ on the left-hand side and $z=a+\mmi u$ on the
right-hand side, respectively, gives
\begin{align}
\label{lem2-equation}
&\int_\mbr L\Bg(\frac12+\mit+\mmi u,\chi\Bg) L\Bg(\frac12-\mit+\mmi u,\overline{\chi}\Bg)K(u)\mmd u \nonumber\\
&\qquad= \int_\mbr L\Bg(\frac12+a+\mit+\mmi u,\chi\Bg) L\Bg(\frac12+a-\mit+\mmi u,\overline{\chi}\Bg) K(u-\mmi a)\mmd u.
\end{align}

Since \(a>1/2\), both Dirichlet series on the right-hand side of
\eqref{lem2-equation} converge absolutely, so we may integrate term by term.
Moreover, we have
\[\int_\mbr K(u-\mmi a)\mme^{-\mmi u \log k\ell}\mmd u = (k\ell)^a \widehat{K}(\log k\ell).\]
Substituting this into \eqref{lem2-equation} proves this lemma.
\end{proof}

The following lemma provides an upper bound for the second moment of \(L\)-functions over all non-principal characters.

\begin{lem}
    \label{lem3}
    Let $q$ be a prime and let $X\geq 2$. Then
\[\sum_{\substack{\chi(\bmod q)\\ \chi\neq\chi_0}}
\int_{-X}^{X} \Bg|L \Bg(\frac12+\mit,\chi\Bg)\Bg|^2 \mmd t \ll qX\log(qX).\]
\end{lem}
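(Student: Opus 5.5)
The plan is to reduce the bound to a mean value estimate for Dirichlet polynomials via an approximate functional equation, and then sum over characters using orthogonality.

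\emph{Reduction.} Since $q$ is prime, every non-principal $\chi$ is primitive, so the approximate functional equation applies. For $|t|\le X$ it gives
\[
L\Bigl(\tfrac12+\mathrm{i}t,\chi\Bigr)=\sum_{n}\frac{\chi(n)}{n^{1/2+\mathrm{i}t}}V\Bigl(\frac{n}{\sqrt{q(1+|t|)}}\Bigr)+\varepsilon(\chi,t)\sum_{n}\frac{\overline\chi(n)}{n^{1/2-\mathrm{i}t}}V\Bigl(\frac{n}{\sqrt{q(1+|t|)}}\Bigr),
\]
where $|\varepsilon(\chi,t)|=1$ and $V$ decays rapidly. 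The $t$-dependence of the cutoff is awkward. To remove it, I would first split $[-X,X]$ dyadically into ranges $|t|\asymp Y$, and on each range use a smoothed version of the equation with a fixed cutoff $\sqrt{qY}$ (shift the contour in the integral representation with a fixed weight). This writes $|L|^2$ as $\ll |A_\chi(t)|^2+|B_\chi(t)|^2$ plus a negligible tail. Here $A_\chi(t)=\sum_{n\le N}a_n\chi(n)n^{-\mathrm{i}t}$ with $N=(qY)^{1/2+\varepsilon}$ and $|a_n|\le n^{-1/2}$. Strictly speaking the smooth weight is an integral over a vertical line of such polynomials, and Cauchy--Schwarz in that integral handles it.

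\emph{Hybrid large sieve.} Next I would apply the hybrid mean value theorem (Gallagher / Montgomery), extending the sum over $\chi$ to all characters by positivity:
\[
\sum_{\chi(\bmod q)}\int_{-Y}^{Y}\Bigl|\sum_{n\le N}a_n\chi(n)n^{-\mathrm{i}t}\Bigr|^2\mathrm{d}t\ll \sum_{n\le N}(qY+n)|a_n|^2 .
\]
This follows from orthogonality of characters together with the Montgomery--Vaughan mean value theorem applied to each residue class. The right side is
\[
\ll (qY+N)\log N\ll qY\log(qY),
\]
since $N\le qY$ for large $qY$. Summing over the dyadic ranges $Y\le X$ gives a geometric sum dominated by $Y=X$. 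This yields $\ll qX\log(qX)$, and the range $|t|\le 1$ is covered by the same bound with $Y=2$.

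\emph{Alternative and main obstacle.} An alternative is to quote directly the known hybrid second moment
\[
\sum_{\chi}\int_{-X}^X|L(\tfrac12+\mathrm{i}t,\chi)|^2\,\mathrm{d}t\ll qX\log(qX)
\]
from Montgomery's \emph{Topics} (Theorem 10.1-type bounds), which is exactly this statement. The main technical point is the uniformity of the approximate functional equation in $t$, i.e.\ making the length $\sqrt{q|t|}$ effectively constant on dyadic blocks. The smoothing-and-contour-shift trick above is how I would handle it, with the $\varepsilon$-loss in $N$ absorbed since only $N\ll qY$ is needed.
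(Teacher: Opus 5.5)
\textbf{Gap.} Your route is different from the paper's. The paper simply quotes \cite[Theorem 3]{Rane1980JLMS} and does some bookkeeping, namely $t\mapsto -t$ via $\chi\mapsto\overline{\chi}$ and positivity for the principal character; that is essentially your ``alternative''. Your hybrid mean value step, $\sum_{\chi}\int_{-Y}^{Y}|\sum_{n\le N}a_n\chi(n)n^{-\mathrm{i}t}|^2\,\mathrm{d}t\ll\sum_n(qY+n)|a_n|^2$, and the dyadic assembly are both fine. The problem is the step where you remove the $t$-dependence of the smooth weight by ``Cauchy--Schwarz in the vertical-line integral''. Writing
\[
\sum_{n\le N}\frac{\chi(n)}{n^{s}}V_s\Big(\frac{n}{\sqrt q}\Big)=\frac{1}{2\pi\mathrm{i}}\int_{(c)}\frac{\gamma(s+w)}{\gamma(s)}G(w)q^{w/2}\Big(\sum_{n\le N}\frac{\chi(n)}{n^{s+w}}\Big)\frac{\mathrm{d}w}{w},
\]
Cauchy--Schwarz costs the square of the $L^1$-norm of the kernel on $\Re w=c$. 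For $|t|\asymp Y$ that kernel has size $\asymp (qY)^{c/2}|G(w)|/|w|$. To keep $(qY)^{c}$ bounded you must take $c\ll 1/\log(qY)$; a fixed $c>0$ loses a power of $qY$. But then $\int_{(c)}|G(w)|\,|\mathrm{d}w|/|w|\asymp\log(1/c)\asymp\log\log(qY)$, because of the pole of $1/w$. So the argument as written yields only $qX\log(qX)(\log\log(qX))^2$, not the stated bound. Making the first sum's weight $t$-independent does not help: the $t$-dependence moves into the gamma ratio of the dual sum, and the same loss appears there.

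\textbf{Fix.} Replace the Mellin integral by partial summation. Put $s=\frac12+\mathrm{i}t$, $F_t(u):=V_s(u/\sqrt q)$ and $S_\chi(t,u):=\sum_{n\le u}\chi(n)n^{-1/2-\mathrm{i}t}$. Then $\sum_n\chi(n)n^{-s}F_t(n)=-\int_1^\infty S_\chi(t,u)F_t'(u)\,\mathrm{d}u$. The function $uF_t'(u)=-\frac{1}{2\pi\mathrm{i}}\int_{(c)}\frac{\gamma(s+w)}{\gamma(s)}G(w)(u/\sqrt q)^{-w}\,\mathrm{d}w$ has no pole at $w=0$. You may therefore shift to $c=-1/4$ or $c=A$ and obtain, uniformly for $|t|\asymp Y$,
\[
|F_t'(u)|\le W(u):=\frac{C}{u}\min\Big\{\Big(\frac{u}{\sqrt{qY}}\Big)^{1/4},\Big(\frac{u}{\sqrt{qY}}\Big)^{-A}\Big\},\qquad \int_0^\infty W(u)\,\mathrm{d}u\ll 1.
\]
Apply Cauchy--Schwarz against the bounded measure $W(u)\,\mathrm{d}u$, then your hybrid mean value theorem to each $S_\chi(\cdot,u)$. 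This gives
\[
\sum_{\chi}\int_{|t|\asymp Y}\Big|\sum_n\frac{\chi(n)}{n^{s}}F_t(n)\Big|^2\mathrm{d}t\ll\int_1^\infty W(u)\,(qY+u)\log(2u)\,\mathrm{d}u\ll qY\log(qY).
\]
The dual sum is handled identically with $\overline{\chi}$ and $-t$, and your dyadic summation then finishes the proof.

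\textbf{Remarks.} For the paper's purposes even a $(\log\log)^2$ or $(qX)^{\varepsilon}$ loss would be harmless, since every application of Lemma \ref{lem3} comes with a power saving. It does not, however, prove the lemma as stated. Once repaired, your argument gives only an upper bound, but it is self-contained and visibly uniform in $q$ and $X$. The paper's citation is shorter, but depends on Rane's error terms being $\ll qX\log(qX)$ throughout the range.
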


\begin{proof}
    The proof of this result follows by applying \cite[Theorem 3]{Rane1980JLMS} together with some calculations.
\end{proof}

For a finite set \(\mcm\) of positive integers and \(\alpha>0\), define
\[S_\alpha(\mcm) := \sum_{m,n\in\mcm} \Bg(\frac{(m,n)}{[m,n]}\Bg)^\alpha,\]
and write \(S(\mcm) = S_{1/2}(\mcm)\). Here, \((m,n)\) and \([m,n]\) denote the greatest common divisor and least common multiple of \(m\) and \(n\), respectively. The following results play a key role in the proof of Theorem \ref{thm1}.

\begin{lem}
    \label{lem4}
    Let \(\mcm\) be a set of integers with \(|\mcm| =N\). As \(N \to \infty\), we have
    \[\max_{|\mcm|=N}\frac{S(\mcm)}{|\mcm|}=\exp\bigg(\bg(2\sqrt{2}+o(1)\bg)\sqrt{\frac{\log N\log_3N}{\log_2N}}\bigg).\]
    Moreover, \(\mcm\) may be chosen so that
\[y_\mcm := \max_{m\in\mcm}P^+(m) \ll (\log N)^{6/5},\]
where \(P^+(n)\) denotes the largest prime factor of \(n\).
\end{lem}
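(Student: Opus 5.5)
This statement is the GCD-sum asymptotic of Bondarenko--Seip and de la Bretèche--Tenenbaum. I would prove the two bounds separately: an explicit construction for the lower bound, and a Rankin-type argument for the upper bound. The construction would then be checked against the condition on $y_\mcm$.

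\emph{Lower bound (construction).} I would follow de la Bretèche--Tenenbaum and take $\mcm$ to be the divisors of a suitable squarefree number built from primes in a range $e\log N\log_2 N< p\le \exp((\log_2 N)^{\gamma})\log N\log_2 N$ for a suitable $\gamma<1$, equipped with the multiplicative weight $f(p)=\lambda\sqrt{\log N\log_2N}/(\sqrt{p}\,\log p)$ and truncated to those $n$ whose number of prime factors is near its expected value. I would then transfer the weighted GCD sum to an unweighted set by the standard level-set argument: keep the integers where $f(n)^2$ lies in a dyadic range carrying most of the mass. Multiplicativity gives the Euler-product identity
\[
\sum_{m,n}f(m)f(n)\Bigl(\frac{(m,n)}{[m,n]}\Bigr)^{1/2}\Big/\sum_n f(n)^2=\prod_p\frac{1+f(p)^2+2f(p)p^{-1/2}}{1+f(p)^2}.
\]
Evaluating this product with the prime number theorem gives $\exp\bigl((\lambda+o(1))\cdots\bigr)$, while the size of the support forces a constraint linking $\lambda$ to $\log N$. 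Optimizing $\lambda$ would give the constant $2\sqrt2$. All primes used are at most $\exp((\log_2N)^{\gamma})\log N\log_2N\ll(\log N)^{6/5}$, which gives the bound on $y_\mcm$.

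\emph{Upper bound.} I would use Gál-type reductions: by the symmetry $(m,n)/[m,n]=(m,n)^2/(mn)$, we may write $S(\mcm)=\sum_{m,n}(m,n)/\sqrt{mn}$. Via $(m,n)=\sum_{d\mid (m,n)}\phi(d)$, a Cauchy--Schwarz step reduces the problem to bounding $\sum_{d}\phi(d)/d\,\bigl(\sum_{m\in\mcm, d\mid m}\sqrt{d/m}\bigr)^2$. I would then apply Rankin's trick with a multiplicative majorant optimized over primes up to $\exp(\sqrt{\log N\log_2N})$, and handle large primes trivially.

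The main obstacle is the sharp upper bound, specifically matching the constant $2\sqrt2$ and the $\log_3N$ factor. That step requires the delicate combinatorial/probabilistic analysis of de la Bretèche and Tenenbaum, so in the paper I would cite their theorem for it and verify only the $y_\mcm$ claim from their explicit construction.
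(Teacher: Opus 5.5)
The paper does not prove Lemma~\ref{lem4}. It quotes both the asymptotic and the support bound from de la Bret\`eche--Tenenbaum, Eq.~(1.4). Your fallback of citing them therefore agrees with the paper, but the argument you sketch around the citation fails exactly where the paper needs the result.

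Only the lower bound is used. The paper needs some $\mcm$ with $|\mcm|=N$, $S(\mcm)/N\ge\exp\bigl((2\sqrt2+o(1))\sqrt{\log N\log_3N/\log_2N}\bigr)$ and $y_\mcm\ll(\log N)^{6/5}$. This gives the constant $\sqrt{2\kappa}$ in Theorem~\ref{thm1} and the bound $S_{1/3}(\mcm)\ll N\exp(y_\mcm^{2/3})$ in the Rankin step. The sharp upper bound, which you single out as the main obstacle, is never used.

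Your construction does not deliver that lower bound. Up to the prime range, the weight $f(p)=\lambda\sqrt{\log N\log_2N}/(\sqrt p\log p)$ is Soundararajan's resonator weight. By partial summation over your range,
\[
\sum_p\frac{f(p)}{\sqrt p}\asymp\lambda\sqrt{\log N}\,(\log_2N)^{\gamma-3/2}.
\]
The size constraint, by contrast, grows like $\lambda^2$, since it is dominated by $\sum_p f(p)^2\log\bigl(p/(\log N\log_2N)\bigr)$. Hence no choice of $\lambda$ gets beyond $\exp\bigl(O(\sqrt{\log N/\log_2N})\bigr)$, and the factor $\sqrt{\log_3N}$ is lost.

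The $\log_3N$ comes from the Bondarenko--Seip weight. Its denominator is $\log p-\log_2N-\log_3N$, with an extra factor $(\log_3N)^{-1/2}$ in front. This turns the sum over the blocks $e^k\log N\log_2N<p\le e^{k+1}\log N\log_2N$ into $\sum_{k\le(\log_2N)^\gamma}1/k\sim\gamma\log_3N$.

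Even with that weight, ``optimizing $\lambda$'' is vacuous. Replacing $f$ by $cf$ multiplies the exponent by $c$ and $\log|\mcm|$ by $c^2$, so the resulting constant is $2\sqrt\gamma<2$, which is the Bondarenko--Seip level. The extra factor $\sqrt2$ is precisely what de la Bret\`eche and Tenenbaum obtained through a finer construction and analysis, and it does not come out of the route you describe.

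For the same reason, checking $y_\mcm\ll(\log N)^{6/5}$ on a Bondarenko--Seip-type support certifies the property for a set that does not attain $2\sqrt2$. The support property has to be taken from the extremal set of de la Bret\`eche--Tenenbaum, which is what the paper's citation does.
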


\begin{proof}
    This is \cite[Eq. (1.4)]{delaBreteche2019PLMS}.
\end{proof}

\subsection{Proof of Theorem \ref{thm1}}

We now turn to the proof of Theorem \ref{thm1}, beginning with the construction of the resonator. Fix \(\kappa\) such that
\[0<\kappa<\min\{\theta,1-\beta\}\]
and put \(N= \lfloor T^\kappa \rfloor\). Choose \(\varepsilon>0\) sufficiently small such that \(\kappa+\varepsilon<\theta\). Let $\mathcal{M}$ be the set of positive integers constructed as in \cite{delaBreteche2019PLMS}, with \(|\mcm|=N\). For sufficiently large \(T\), we have \((m,q)=1\) for every \(m \in \mcm\). For \(j \in \mbz\), define
\[I_j = \bgg[ \Bg(1+ \frac{\log T}{T}\Bg)^j,\Bg(1+ \frac{\log T}{T}\Bg)^{j+1} \bgg).\]
Then, set
\[\mcm_{j,a} := \{m \in \mcm : m \in I_j, m \equiv a \pmod q \}.\]
Whenever \(\mcm_{j,a} \neq \emptyset\), let \(h :=h_{j,a}=\min\mcm_{j,a}\) and 
\[r(h_{j,a}) = \sqrt{|\mcm_{j,a}|} = \sqrt{\sum_{m \in \mcm_{j,a}}}1.\]
Denote by \(\mch\) the set of all \(h\). For every non-principal character \(\chi\) modulo \(q\), define
\[R_\chi(t) : = \sum_{h\in\mch} r(h)\chi(h) h^{-\mit}.\]
By the Cauchy-Schwarz inequality, we have
\begin{align}
    \label{Rupper}
    |R_\chi(t)|^2 \le \Bg(\sum_{h\in\mch}r(h)\Bg)^2 \le N\sum_{h\in\mch}r(h)^2 \le N|\mcm|.
\end{align}
As in \cite{Bondarenko2017DukeMJ}, we take \(\Phi(t) := \mme^{-t^2/2}\), whose Fourier transform satisfies \(\widehat{\Phi}(\xi)=\sqrt{2\pi}\Phi(\xi)>0\) for all \(\xi\in\mbr\). According to \cite[Lemma 5]{Bondarenko2018MathAnn}, we obtain
\begin{align}
    \label{intRPbound}
    \sum_{\substack{\chi(\bmod q) \\ \chi\neq\chi_0} }\int_\mbr |R_\chi(t)|^2 \Phi\Bg(\frac{t\log T}{T}\Bg) \mmd t \ll q \frac{|\mcm|T}{\log T}.
\end{align}
Fix a sufficiently large integer \(\eta \ge 2\), define
\[K(u):= K_\eta(u) = \frac{\sin^{2\eta}(\tfrac{1}{\eta}\varepsilon\log Tu)}{(\tfrac{1}{\eta}\varepsilon\log T)^{2\eta-1}u^{2\eta}}.\]
As in \cite[Lemma 5]{Bondarenko2023BLMS}, its Fourier transform \(\widehat K\) is real, even and non-negative, and is supported on \(|v|\le 2\varepsilon\log T\). Moreover,
\begin{align}
    \label{Kpro1}
    0 \le \widehat{K}(v) \le \widehat{K}(0) \asymp_\eta 1,
\end{align}
and for sufficiently large \(\eta\), we have
\begin{align}
    \label{Kpro2}
    \widehat{K}(v) \ge \frac{1}{2}\widehat{K}(0), \qquad |v| \le \frac{\varepsilon\log T}{3\eta}.
\end{align}
We also have the following bound for \(u\neq0\):
\begin{align}
    \label{Kpro3}
    K(u) \ll_{\eta,\varepsilon} (\log T)^{1-2\eta} |u|^{-2\eta}.
\end{align}

For every non-principal character \(\chi\pmod q\), define
\[\Upsilon_\chi(t,u):=L\Bg(\frac{1}{2}+\mit+\mmi u,\chi \Bg)L\Bg(\frac{1}{2}-\mit+\mmi u,\overline{\chi} \Bg)K(u)\]
and set
\[I(T):=\sum_{\substack{\chi(\bmod q) \\ \chi\neq\chi_0} } \int_\mbr|R_\chi(t)|^2 \Phi\Bg(\frac{t\log T}{T}\Bg)\int_\mbr\Upsilon_\chi(t,u)\mmd u\mmd t.\]
Inspired by \cite{Bondarenko2018MathAnn,Chirre2019JNT,delaBreteche2019PLMS,Ddyang2022Mathematika}, we show that the main contribution to \(I(T)\) comes from the range \(2T^\beta \le |t| \le T/2\) and \(|u| \le |t|/2\). We first consider the range \(|t| \le 2T^\beta\). For \(|u| \le T^\beta\), the Cauchy-Schwarz inequality gives
\begin{align*}
    &\sum_{\substack{\chi(\bmod q) \\ \chi\neq\chi_0} }\int_{|t|\le 2T^\beta}\Bg|L\Bg(\frac{1}{2}+\mit+\mmi u,\chi \Bg)L\Bg(\frac{1}{2}-\mit+\mmi u,\overline{\chi} \Bg)\Bg|\mmd t  \\
    & \quad\le\Bg(\sum_{\substack{\chi(\bmod q) \\ \chi\neq\chi_0} }\int_{|t|\le 2T^\beta} \Bg|L\Bg(\frac{1}{2}+\mit+\mmi u,\chi \Bg)\Bg|^2 \mmd t\Bg)^{1/2} \\
    &\qquad\times\Bg(\sum_{\substack{\chi(\bmod q) \\ \chi\neq\chi_0} }\int_{|t|\le 2T^\beta} \Bg|L\Bg(\frac{1}{2}-\mit+\mmi u,\overline{\chi} \Bg)\Bg|^2 \mmd t\Bg)^{1/2}.
\end{align*}
Since \(|u|\le T^\beta\), both shifted intervals are contained in \([-3T^\beta, 3T^\beta]\). Hence, Lemma \ref{lem3} yields
\[\sum_{\substack{\chi(\bmod q) \\ \chi\neq\chi_0} }\int_{|t|\le 2T^\beta}\Bg|L\Bg(\frac{1}{2}+\mit+\mmi u,\chi \Bg)L\Bg(\frac{1}{2}-\mit+\mmi u,\overline{\chi} \Bg)\Bg|\mmd t \ll qT^\beta \log (qT).\]
Therefore, by the non-negativity of \(K\) and \eqref{Kpro1}, we obtain
\begin{align}
    \label{tail1-1}
    \sum_{\substack{\chi(\bmod q) \\ \chi\neq\chi_0} }\int_{|t|\le 2T^\beta} \int_{|u|\le T^\beta} \Bg|\Upsilon_\chi(t,u)\Bg| \mmd u \mmd t \ll_\eta qT^\beta \log (qT).
\end{align}
For \(|u| > T^\beta\), applying the Cauchy-Schwarz inequality and Lemma \ref{lem3} again,
and combining \eqref{Kpro3}, we obtain
\begin{align}
    \label{tail1-2}
    \sum_{\substack{\chi(\bmod q) \\ \chi\neq\chi_0} }\int_{|t|\le 2T^\beta} \int_{|u|> T^\beta} \Bg|\Upsilon_\chi(t,u)\Bg| \mmd u \mmd t &\ll_{\eta,\varepsilon} q(\log T)^{1-2\eta}\int_{T^\beta}^\infty u^{1-2\eta}\log(q(2+u))\mmd u \nonumber\\
    &\ll_{\eta,\varepsilon} q(\log T)^{2-2\eta}T^{-2\beta(\eta-1)}.
\end{align}
Combining \eqref{tail1-1} and \eqref{tail1-2} with \eqref{Rupper}, and recalling that \(|\mcm|=N\), we obtain
\begin{align}
\label{tail1}
    \Bg|\sum_{\substack{\chi(\bmod q) \\ \chi\neq\chi_0} }\int_{|t|\le 2T^\beta}|R_\chi(t)|^2&\Phi\Bg(\frac{t\log T}{T}\Bg)\int_\mbr\Upsilon_\chi(t,u)\mmd u\mmd t\Bg|\nonumber\\
    &\ll qN^2 T^\beta \log (qT) +qN^2(\log T)^{2-2\eta}T^{-2\beta(\eta-1)} \nonumber\\
    &=O\bg(qN^2 T^\beta \log (qT)\bg) + o\Bg(qN\frac{T}{\log T}\Bg).
\end{align}
Next, consider \(|t|>T/2\). Since \(\eta \ge 2\), Lemma \ref{lem1} and \eqref{Kpro3} imply
\[\int_\mbr\Bg|L\Bg(\frac{1}{2}+\mit+\mmi u,\chi \Bg)L\Bg(\frac{1}{2}-\mit+\mmi u,\overline{\chi}\Bg)\Bg|K(u)\mmd u \ll_{\eta,\varepsilon}q^{1/2+\varepsilon}(1+|t|)^{1/2+\varepsilon}.\]
Together with \eqref{Rupper} and the exponential decay of \(\Phi\), this gives 
\begin{align}
    \label{tail2}
    \Bg|\sum_{\substack{\chi(\bmod q) \\ \chi\neq\chi_0} }\int_{|t|>T/2}|R_\chi(t)|^2 \Phi\Bg(\frac{t\log T}{T}\Bg)\int_\mbr\Upsilon_\chi(t,u)\mmd u\mmd t \Bg|= o\Bg(qN\frac{T}{\log T}\Bg).
\end{align}
Finally, if \(2T^\beta \le |t|\le T/2\), \(|u|>|t|/2\), then \(|u|>T^\beta\) and \(|t|<2|u|\). Thus, using again the Cauchy-Schwarz inequality and Lemma \ref{lem3}, we have
\begin{align}
    \label{tail3}
    \sum_{\substack{\chi(\bmod q) \\ \chi\neq\chi_0} }\int_{2T^\beta \le |t|\le T/2}|R_\chi(t)|^2 &\Phi\Bg(\frac{t\log T}{T}\Bg)\int_{|u|>|t|/2}\Bg|\Upsilon_\chi(t,u)\Bg|\mmd u\mmd t \nonumber \\
    &\ll_{\eta,\varepsilon}qN^2(\log T)^{2-2\eta}T^{-2\beta(\eta-1)} = o\Bg(qN\frac{T}{\log T}\Bg).
\end{align}
Consequently, substituting \eqref{tail1}, \eqref{tail2} and \eqref{tail3} into the definition of \(I(T)\) yields that
\begin{align*}
    I(T) = \sum_{\substack{\chi(\bmod q) \\ \chi\neq\chi_0} }\int_{2T^\beta \le |t|\le T/2}|R_\chi(t)|^2 &\Phi\Bg(\frac{t\log T}{T}\Bg) \int_{|u|\le |t|/2} \Upsilon_\chi(t,u)\mmd u\mmd t \\
    &\quad+O\bg(q N^2 T^\beta \log(qT)\bg) +o\Bg(qN\frac{T}{\log T}\Bg).
\end{align*}
Now, define
\[\mcz_{q,\beta}(T) := \max_{\substack{\chi(\bmod q) \\ \chi\neq\chi_0} }\max_{T^\beta \le t \le T}\Bg|L\Bg(\frac{1}{2}+\mit,\chi\Bg)\Bg|.\]
Note that the region \(2T^\beta \le |t| \le T/2\), \(|u|\le|t|/2\) gives \(T^\beta \le |t\pm u|\le T\). Since \(N=T^{\kappa+o(1)}\) with \(\kappa < 1-\beta\), we have 
\[q N^2 T^\beta \log(qT)=o\Bg(qN\frac{T}{\log T}\Bg),\]
and, by \eqref{intRPbound},
\begin{align}
    \label{max1}
    I(T) \ll qN\frac{T}{\log T}\mcz_{q,\beta}(T)^2 + o\Bg(qN\frac{T}{\log T}\Bg).
\end{align}

Expanding \(|R_\chi(t)|^2\) in \(I(T)\) and using Lemma \ref{lem2}, we have
\[I(T) = \frac{T}{\log T}\sum_{h,g\in\mch}r(h)r(g)\sum_{k,\ell\ge1}\frac{\widehat{K}(\log k\ell)}{\sqrt{k\ell}}\whp\Bg(\frac{T}{\log T}\log\frac{g\ell}{hk}\Bg)\sum_{\substack{\chi(\bmod q) \\ \chi\neq\chi_0} } \chi(hk)\overline{\chi(g\ell)}.\]
Then, the orthogonality of Dirichlet characters implies
\begin{align}
    \label{ITeq}
    I(T) = (q-1)G(T)-E(T),
\end{align}
where
\begin{align*}
    G(T) := \frac{T}{\log T}\sum_{h,g\in\mch}r(h)r(g) \sum_{\substack{k,\ell\ge1,(k\ell,q)=1 \\ hk\equiv g\ell (\bmod q)}}\frac{\widehat{K}(\log k\ell)}{\sqrt{k\ell}}\whp\Bg(\frac{T}{\log T}\log \frac{g\ell}{hk}\Bg)
\end{align*}
and let \(E(T)\) denote the corresponding contribution arising from the second term in the character orthogonality relation. 

We first consider \(G(T)\). For each \(m,n \in \mcm\), let \(u,v \in \mbz\), and \(a,b \pmod q\) be the unique indices such that \(m \in \mcm_{u,a}\), \(n \in \mcm_{v,b}\). Then, put \(h = h_{u,a}\), \(g = h_{v,b}\). Define 
\[k=\frac{n}{(m,n)}\quad \text{and} \quad \ell = \frac{m}{(m,n)}.\]
Thus, we have \((k,\ell)=1\) and \(mk=n\ell\). Since \(h\equiv m\pmod q\), \(g\equiv n\pmod q\), and every element of \(\mcm\) is coprime to \(q\), we have 
\[(m,q)=(n,q)=((m,n),q)=1.\]
Hence the integers \(k,\ell\) satisfy \((k\ell,q)=1\) and \(hk\equiv g\ell \pmod q\). Moreover, by the definition of sets \(\mcm_{u,a}\) and \(\mcm_{v,b}\)
\[1 \le \frac{m}{h} < 1+\frac{\log T}{T}, \quad 1 \le \frac{n}{g} < 1+\frac{\log T}{T}.\]
Since \(mk=n\ell\), 
\[\frac{g\ell}{hk}=\frac{gm}{nh}=\frac{m/h}{n/g}.\]
It follows that
\[\Bg|\frac{T}{\log T}\log \frac{g\ell}{hk}\Bg| \le \frac{T}{\log T}\log \Bg(1+\frac{\log T}{T}\Bg) \le 1\]
for sufficiently large \(T\). Therefore, we obtain the following lower bound
\[\whp\Bg(\frac{T}{\log T}\log \frac{g\ell}{hk}\Bg)\gg 1.\]
Fix \(h = h_{u,a}\), \(g = h_{v,b}\) and coprime positive integers \(k,\ell\). The number of choices 
\[m^\prime \in \mcm_{u,a} \quad \text{and}\quad n^\prime \in\mcm_{v,b} \]
satisfying \(m^\prime k = n^\prime \ell\) is at most 
\[\min\{|\mcm_{u,a}|,|\mcm_{v,b}| \}=\min\{r(h)^2,r(g)^2\} \le r(h)r(g).\]
Indeed, since \((k,\ell)=1\), the relation \(m^\prime k=n^\prime\ell\) implies
\[m^\prime = \ell d \quad \text{and} \quad n^\prime =kd\]
for some positive integer \(d\). Thus either \(m^\prime\) or \(n^\prime\) uniquely determines the other. By \eqref{Kpro2}, we have
\begin{align}
    \label{Glower1}
    G(T) \gg \frac{T}{\log T}\sum_{\substack{m,n\in\mcm \\ [m,n]/(m,n) \le T^{\varepsilon/3\eta}}}\sqrt{\frac{(m,n)}{[m,n]}}.
\end{align}
Following the argument in \cite[p. 25]{delaBreteche2019PLMS}, for fixed \(m \in \mcm\), we get
\[\sum_{n \in \mcm}\Big( \frac{(m,n)}{[m,n]} \Big)^{1/3} \le \prod_{p \le y_\mcm}\Big(1+\frac{2}{p^{1/3}-1}\Big) \ll \exp \big(y_\mcm^{2/3}\big).\]
Furthermore, Rankin's trick yields that
\begin{align*}
    \sum_{\substack{m,n\in\mcm \\ [m,n]/(m,n) \le T^{\varepsilon/3\eta}}} \sqrt{\frac{(m,n)}{[m,n]}} &= \Big(\sum_{m,n\in\mcm} - \sum_{\substack{m,n\in\mcm \\ [m,n]/(m,n) > T^{\varepsilon/3\eta}}}\Big) \sqrt{\frac{(m,n)}{[m,n]}} \nonumber \\
    & \ge \sum_{m,n\in\mcm}  \sqrt{\frac{(m,n)}{[m,n]}} -T^{-\varepsilon/18\eta}S_{1/3}(\mcm) \nonumber \\
    & \gg\sum_{m,n\in\mcm}  \sqrt{\frac{(m,n)}{[m,n]}} -T^{-\varepsilon/18\eta}N \exp \big(y_\mcm^{2/3}\big).
\end{align*}
Since \(y_\mcm \ll (\log N)^{6/5}\), using \eqref{Glower1}, we have
\begin{align}
    \label{Glower2}
    G(T) \gg \frac{T}{\log T}S(\mcm).
\end{align}

On the other hand, we have \(\widehat{K}(\log k\ell) = 0\) unless \( k \ell \le T^{2\varepsilon}\), and since both \(\widehat{K}\) and \(\whp\) are bounded, we have
\[E(T) \ll \frac{T}{\log T} \Bg(\sum_{h \in \mch}r(h)\Bg)^2 \sum_{k\ell \le T^{2\varepsilon}}\frac{1}{\sqrt{k\ell}}.\]
By the Cauchy-Schwarz inequality,
\[\sum_{h \in \mch}r(h) \le N,\]
while, for sufficiently large \(X\),
\[\sum_{k\ell \le X}\frac{1}{\sqrt{k\ell}} \ll \sqrt{X}\log (2X).\]
It follows that
\[E(T) \ll \frac{T}{\log T} N^2 T^\varepsilon\log T.\]
By our choice of \(\varepsilon\), \(\kappa+\varepsilon<\theta\). Then, since \(S(\mcm) \ge N\), we get
\[E(T) = o\Bg(q\frac{T}{\log T}S(\mcm)\Bg).\]
Combining this estimate with \eqref{ITeq} and \eqref{Glower2} shows
\[I(T) \gg q\frac{T}{\log T}S(\mcm). \]
Substituting this into \eqref{max1}, we obtain
\begin{align}
    \label{max2}
    \mcz_{q,\beta}(T)^2 \gg \frac{S(\mcm)}{N}.
\end{align}

Applying Lemma \ref{lem4}, it follows from the definition of \(N\) and \eqref{max2} that
\[
\mcz_{q,\beta}(T) \ge \exp\bgg(\bg(\sqrt{2\kappa} +o(1)\bg)\sqrt{\frac{\log T \lt T}{\lo T}}\bgg)
\]
as \(T \to \infty\). Given any \(0 < c < \sqrt{2\min\{\theta,1-\beta\}}\), we choose 
\[\frac{c^2}{2}<\kappa<\min\{\theta,1-\beta\},\]
and then choose \(\varepsilon>0\) sufficiently small so that \(\kappa+\varepsilon<\theta\). Theorem \ref{thm1} follows.

\section*{Acknowledgments}
Zikang Dong was supported by the National Natural Science Foundation of China (Grant No. 1240011770). Qiyu Yang was supported by the Natural Science Foundation of Henan Province (Grant No. 252300421782) and the National Natural Science Foundation of China (Grant No. 12601011).
	
	\bibliographystyle{siam}
    \bibliography{references}
\end{document}